\newtheorem{thm}{Theorem}
\newtheorem{cor}[thm]{Corollary}
\newtheorem{prop}[thm]{Proposition}
\DeclareMathOperator{\sgn}{sgn}
\DeclareMathOperator{\RE}{Re}
\DeclareMathOperator{\IM}{Im}
\def \EE {\mathbb{E}}
\def \RR {\mathbb{R}}
\def \QQ {\mathbb{Q}}
\def \CC {\mathbb{C}}
\def \PP {\mathbb{P}}
\def \pS {\mathcal{S}}
\def \pL {\mathcal{L}}
\renewcommand{\Re}{{\rm Re}}
\renewcommand{\Im}{{\rm Im}}
\def \Log {{\rm Log}}
\def \Arg {{\rm Arg}}
\def \hkappa {\widehat{\kappa}}
\title{On exit time of stable processes}
\author{Piotr Graczyk, Tomasz Jakubowski}
\date{\today}
\begin{document}
\maketitle

\footnotetext{\noindent
\emph{ 2000 Mathematics Subject Classification:} 60G52   \\
\emph{Key words and phrases:} Stable process, exit time \\
This research was partially supported by grants MNiSW and ANR-09-BLAN-0084-01.}

\begin{abstract}
 We study the exit time $\tau=\tau_{(0,\infty)}$ for 1-dimensional  strictly stable processes
and  express its Laplace transform at $t^\alpha$ as the Laplace transform  of a positive
random variable with explicit density.  Consequently,  $\tau$ satisfies
some  multiplicative convolution relations. For some stable processes, e.g. for the symmetric
$\frac23$-stable process, explicit formulas for the Laplace transform and the density
of $\tau$ are obtained as an application.
\end{abstract}
\section{Introduction}
Let $\alpha \in (0,2)$ and $(X_t, \PP^x)$ be a strictly $\alpha$-stable process in $\RR$ with characteristic function
$$
\EE^0 e^{i X_t z} = \exp\left[-t |z|^\alpha\left(1-i \beta \arctan \frac{\pi \alpha}{2}\sgn z\right)\right],
$$
where $\beta \in [-1,1]$ and $\beta =0$ for $\alpha=1$. For any $D \subset \RR$ let 
$$\tau_D = \inf\{t \ge 0 \colon X_t \not\in D\}$$
be the first exit time from $D$ of the process $X_t$.  Throughout this article we shall consider the starting point $x>0$ and
$$
\tau=\tau_{(0,\infty)},
$$
the exit time of $X_t$ from the positive half-line. 

The question of the first exit time from domains are basic for all stochastic processes. Surprisingly  few exact formulas are known for stable processes. The only exceptions are Brownian motion, completely asymmetric stable processes with $\alpha >1$ (see \cite{Bing}, \cite{2006-EAK-sv}, \cite{TS-em})
and symmetric Cauchy process (\cite{1956-DAD-tams}, see also \cite{KKMS}). The quotient $\hat \tau/\tau$
was studied for independent $\tau$ and dual $\hat \tau$ in \cite{Doney2}. 

Some recent results on this problem in the completely asymmetric case  were obtained by T. Simon in \cite{TS-em} 
  and next were applied in
 \cite{TS-prep1} and   \cite{TS-prep2}.
On the other hand, some formulas were found by A. Kuznetsov in \cite{AK-ap}, however the final expressions are complicated. M. Kwa\'snicki in \cite{MK} gives an integral representation of the density of $\tau$ in the case of symmetric stable processes ($\beta =0$).

In this article we study the exit time $\tau=\tau_{(0,\infty)}$ for 1-dimensional  stable processes
and give in Theorem \ref{thm:main} a new formula for its Laplace transform. It follows that $\tau$ satisfies
some multiplicative convolution relations(Corollary \ref{cor:sploty}); in particular for $\alpha>1$
the exit time $\tau$ is the multiplicative convolution of a $1/\alpha$-stable subordinator
with an explicitly given random variable $M_{\alpha,\rho}$.  We generalize in this way
the result of  \cite{TS-em}   for all stable processes. Applications of Theorem \ref{thm:main} 
are next given  in the final part of the article. New explicit formulas for the Laplace transform and the density
of $\tau$ are proven for the processes dual to those of the Doney's class ${C}_{1,1}$, in particular 
for the symmetric  $\frac23$-stable process (Proposition \ref{cor:Lap} and Corollary \ref{cor:density}).
Further applications of Theorem \ref{thm:main} will be  presented in a forthcoming paper. 

The main tool  to prove the results of this  article  is a series representation
that we obtained in \cite{PG-TJ-aihp} for the logarithm of the bivariate Laplace exponent $\kappa(\eta,\theta)$ of the
ascending ladder process built from the process $X_t$. This
application of the series representation of $\ln \kappa$ was announced in \cite{PG-TJ-aihp}. 
It allows to  determine explicitely
in Proposition \ref{prop:St} the inverse Stieltjes transform of the function $1/\kappa(1,\theta)$. 
 

 \section{ Stieltjes transform and Wiener-Hopf factors}
 In this part of the article we will exploit our series representation
of
$\kappa(1,\theta)$  from \cite{PG-TJ-aihp} in inverting    a Stieltjes transform.

 Recall that if  $ \mu$ is a positive Borel measure on $[0,\infty)$
 then for  any $x \in (0,\infty)$  the Stieltjes transform of $\mu$ is defined by
\begin{equation}\label{eq:ST}
\pS \mu(\theta) = \int_0^\infty \frac{1}{\theta+x}   d\mu(x)
\end{equation}
 whenever the integral converges. 
 According to \cite{CB-LNM}, a function $G$ on $(0,\infty)$ is of the form 
$ G(\theta)=a+\pS \mu(\theta)$ for a positive measure $\mu$ and $ a\ge 0$
if and only if

(S1) $G$  extends to a  holomorphic function in the cut plane $\CC\setminus \RR_-$

(S2) $G(\theta)\ge 0$ for $\theta>0$

(S3) $\Im G(z)\le 0$ for $\Im z>0$.\\
Then the inverse Stieltjes transform is 
$$
\pS^{-1}(G)(x)= -(1/\pi)\lim_{y\to 0^+} \Im G(-x+iy)\,, \qquad x>0\,, 
$$
where the limit, in general, is in  vague sense  and equals   $\mu$. If $\mu$
is absolutely continuous with a continuous density, the limit is  equal to the density
of $\mu$ for all $x>0$ (\cite{Widder}).\\

Let $\alpha \in (0,2)$ and $(X_t, \PP^x)$ be a strictly $\alpha$-stable process in $\RR$. By $\kappa_{\alpha,\rho}(\eta,\theta)$ we denote the bivariate Laplace exponent of the
ascending ladder process built from $X_t$. We normalize it requiring that $\kappa_{\alpha,\rho}(1,0)=1$. To simplify the notation we will write $\kappa(\eta,\theta)$ for a fixed pair $\alpha,\rho$ (or equivalently a fixed process $X_t$). By $\hat\kappa$ we denote the Laplace exponent for the dual  process $\hat X_t=-X_t$. As usually we write the positivity coefficient 
$$
\rho=\PP^0(X_t\ge 0)= \frac{1}{2}+ \frac{1}{\pi \alpha}\arctan\left(\beta \tan\frac{\pi \alpha}{2}\right).
$$
\begin{prop}\label{prop:St}
For $\rho \in (0, 1] \setminus \{ 1/\alpha \}$ we have
\begin{equation}\label{eq:inv_kappa}
\frac{\sin(\rho \alpha  \pi)}{\pi} \int_0^\infty \frac{1}{x+\theta} \frac{x^{\alpha} \hkappa(1,x)}{x^{2\alpha} +2x^\alpha\cos(\rho\alpha\pi) +1}\, dx = \frac{1}{\kappa(1,\theta)}.
\end{equation}
\end{prop}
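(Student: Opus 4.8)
The plan is to recognise that the right-hand side $1/\kappa(1,\theta)$ is precisely the Stieltjes transform to be inverted, so that \eqref{eq:inv_kappa} amounts to computing $\pS^{-1}\bigl(1/\kappa(1,\cdot)\bigr)$ and checking that it equals the density
$$
m(x)=\frac{\sin(\rho\alpha\pi)}{\pi}\,\frac{x^{\alpha}\hkappa(1,x)}{x^{2\alpha}+2x^{\alpha}\cos(\rho\alpha\pi)+1},\qquad x>0.
$$
First I would set $G(\theta)=1/\kappa(1,\theta)$ and verify that it is of the form $a+\pS\mu(\theta)$ by checking (S1)--(S3) of \cite{CB-LNM}. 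Holomorphy of $\kappa(1,\cdot)$ on $\CC\setminus\RR_-$ together with $\kappa(1,\theta)\neq0$ there yields (S1); this is where the series representation of $\ln\kappa(1,\theta)$ from \cite{PG-TJ-aihp} enters, as it provides the analytic continuation of $\kappa(1,\cdot)$ off the positive axis. Positivity of the Laplace exponent gives $G(\theta)>0$ for $\theta>0$, which is (S2), and since $\kappa(1,\theta)\to\infty$ as $\theta\to\infty$ we have $G(\theta)\to0$, forcing the constant $a=0$, so that $\mu=\pS^{-1}(G)$.

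The condition (S3), that $\Im G(z)\le0$ for $\Im z>0$, I would deduce from the stronger structural fact that $\kappa(1,\cdot)$ is a complete Bernstein function and hence maps the upper half-plane into itself; then $\Im\bigl(1/\kappa(1,z)\bigr)=-\Im\kappa(1,z)/|\kappa(1,z)|^{2}\le0$. I expect the series representation of $\ln\kappa$ to exhibit this directly, by showing that $\kappa(1,\cdot)$ sends $\{\Im z>0\}$ into a sector contained in $\{\Im\ge0\}$, which simultaneously re-confirms (S1).

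With (S1)--(S3) in hand and $a=0$, the inversion formula quoted before the statement gives
$$
\pS^{-1}(G)(x)=-\frac1\pi\lim_{y\to0^+}\Im\frac{1}{\kappa(1,-x+iy)}=\frac1\pi\,\frac{\Im\kappa(1,xe^{i\pi})}{|\kappa(1,xe^{i\pi})|^{2}},
$$
where $xe^{i\pi}$ denotes the boundary value reached from the upper side of the cut. By Schwarz reflection $\kappa(1,xe^{-i\pi})=\overline{\kappa(1,xe^{i\pi})}$, so that $|\kappa(1,xe^{i\pi})|^{2}=\kappa(1,xe^{i\pi})\kappa(1,xe^{-i\pi})$ and $\Im\kappa(1,xe^{i\pi})=\tfrac{1}{2i}\bigl(\kappa(1,xe^{i\pi})-\kappa(1,xe^{-i\pi})\bigr)$. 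It then remains to evaluate the two boundary values, for which I would use the Wiener--Hopf factorization
$$
1+\Psi(\xi)=\kappa(1,-i\xi)\,\hkappa(1,i\xi),
$$
continued analytically so that $-i\xi$ tends to $xe^{\pm i\pi}$ on the two sides of the cut while $i\xi\to x>0$; this gives $\kappa(1,xe^{\pm i\pi})=(1+\Psi_{\pm})/\hkappa(1,x)$, where $\Psi_{\pm}$ are the two boundary values of $\Psi$ and $\hkappa(1,x)$ is the same positive real number on both sides. Substituting the explicit stable exponent, whose continuation to the imaginary axis has modulus proportional to $x^{\alpha}$ and argument $\pm\pi\alpha\rho$, collapses the numerator to a multiple of $x^{\alpha}\sin(\rho\alpha\pi)\,\hkappa(1,x)$ and the modulus squared to $x^{2\alpha}+2x^{\alpha}\cos(\rho\alpha\pi)+1$ (in the normalisation fixed by $\kappa(1,0)=1$), producing exactly $m(x)$; since $m$ is continuous on $(0,\infty)$, the vague limit equals the density pointwise by \cite{Widder}, and $\pS m=G$ is \eqref{eq:inv_kappa}.

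The main obstacle is the branch bookkeeping in the last step: one must continue $\Psi$ and the factors $\kappa(1,\cdot)$, $\hkappa(1,\cdot)$ consistently to the two sides of the cut, so that the phase of $\Psi_{\pm}$ comes out as $e^{\pm i\pi\alpha\rho}$ with the correct sign, since the naive continuation flips the sign of $\sin(\rho\alpha\pi)$. This is precisely where the series representation of $\ln\kappa$ from \cite{PG-TJ-aihp} is indispensable, as it pins down the analytic continuation and the argument of $\kappa(1,\cdot)$ along the cut. Secondary care is needed to establish (S3) rigorously and to record why $\rho=1/\alpha$ is excluded: there $\sin(\rho\alpha\pi)=0$ while $x^{2\alpha}+2x^{\alpha}\cos(\rho\alpha\pi)+1=(x^{\alpha}-1)^{2}$ degenerates, so both $m$ and the representation break down.
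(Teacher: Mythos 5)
Your overall framework is exactly the paper's: verify (S1)--(S3) for $G=1/\kappa(1,\cdot)$, force $a=0$ by letting $\theta\to\infty$, and identify the boundary limit with the continuous density. Your route to the boundary values, however, is genuinely different. The paper substitutes $-x+iy$ into the series representation of $\ln\kappa(1,\cdot)$ from \cite{PG-TJ-aihp} and resums via $\sum_k p^k\sin(k\varphi)/k$ and $\sum_k p^k\cos(k\varphi)/k$, recognizing $\log\hkappa(1,x)$ in the real part; you instead continue the Wiener--Hopf identity $1+\Psi(\xi)=\kappa(1,-i\xi)\,\hkappa(1,i\xi)$ analytically through the two half-planes in $\xi$. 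That computation does work: continuing through $\{\Re\xi<0\}$ sends $-i\xi$ to the upper lip of the cut and $\Psi$ to $x^{\alpha}e^{i(\pi\alpha/2+\chi)}=x^{\alpha}e^{i\pi\alpha\rho}$, where $\chi=\pi\alpha(\rho-\tfrac12)$, so $\kappa(1,xe^{\pm i\pi})=(1+x^{\alpha}e^{\pm i\pi\alpha\rho})/\hkappa(1,x)$, the sign of $\sin(\rho\alpha\pi)$ comes out positive, and the quotient reproduces the kernel in \eqref{eq:inv_kappa}. Two remarks: (a) the constant is right only in the normalization $|\Psi(\xi)|=|\xi|^{\alpha}$ (the one underlying \cite{PG-TJ-aihp}); it is the normalization of the \emph{process}, not the condition $\kappa(1,0)=1$, that removes the spurious factor $(\cos\chi)^{-1}$ produced by the $\bigl(1-i\beta\tan\frac{\pi\alpha}{2}\sgn z\bigr)$ form of the exponent; (b) your route has real advantages over the paper's: it requires no restriction to non-Liouville irrational $\alpha$ followed by a limiting argument in $\alpha$, no separate continuity argument at $x=1$, and it gives existence of the boundary values of $\kappa(1,\cdot)$ on the cut for free, since $\hkappa(1,x)\ge\hkappa(1,0)=1>0$.

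The genuine gap is (S3). You propose to deduce $\Im G\le 0$ on $\{\Im z>0\}$ from the assertion that $\kappa(1,\cdot)$ is a complete Bernstein function. But for a Bernstein function $f\not\equiv 0$, ``$f$ is complete Bernstein'' is \emph{equivalent} to ``$1/f$ is a Stieltjes function'' (\cite{2010-RS-RS-ZV-gsm}); so the structural fact you invoke is equivalent to the proposition being proved, and assuming it is circular. Your fallback --- that the series representation of $\ln\kappa$ ``exhibits directly'' that $\kappa(1,\cdot)$ maps the upper half-plane into a sector --- cannot work as stated: the series converges only on the unit disc minus the cut, and local information near $(0,1)$ cannot control where far-away points of the half-plane are sent; a Pick-type mapping property is global. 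The paper closes exactly this hole with a maximum-principle argument: the harmonic function $-(1/\pi)\Im G$ extends continuously to $\{\Im z\ge 0\}$ with boundary values $l(-x)>0$ for $x<0$ and $0$ for $x\ge 0$, and $G(z)\to 0$ as $|z|\to\infty$ by \cite{2006-SF-ptrf}, so that $\Im G\le 0$ on $\{\Im z>0\}$ by \cite[1.10]{ABR}. You already possess every ingredient for this step --- your Wiener--Hopf computation supplies the boundary values, positivity included --- so your proof is repaired by replacing the complete-Bernstein claim with this maximum-principle argument.
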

\begin{proof}

Denote $G(\theta)=1/\kappa(1,\theta)$. The function $G(\theta) $  extends to 
a  holomorphic function $h_1(z)$ on $\CC \setminus \RR_-$ (see \cite{2006-SF-ptrf},(i) p.205).
Let $\pL$ be the set of Liouville numbers. For $\theta \in (0,1)$ and $\alpha\not\in\pL \cup \QQ$ we have  by \cite{PG-TJ-aihp}
\begin{align*} 
 G(\theta) = \exp\left( - \sum_{m=1}^\infty \frac{(-1)^{m+1} \theta^{m}\sin(\rho m \pi)}{m\sin(\frac{m\pi}{\alpha})} - \sum_{k=1}^\infty \frac{(-1)^{k+1} \theta^{\alpha k}\sin(\rho\alpha k\pi)}{k\sin(\alpha k\pi)}\right).
\end{align*}
The right hand side of the last formula may be extended to a holomorphic  function $h_2(z)$ on $\{z \in \CC \colon |z| < 1\}\setminus \RR_-$ defining
$
w^{\alpha} = \exp(\alpha \Log w)
$
where 
$ \Log w=\ln|w|+i\Arg w$, $\Arg w\in(-\pi,\pi]$, is the principal value of the complex
logarithm.
We note that $h_1 = h_2$ on $(0,1)$, hence $h_1 = h_2$ on $\{z \in \CC \colon |z| < 1\}\setminus \RR_-$ and $h_2$ extends to a holomorphic function on $\CC \setminus \RR_-$, equal for $|z|>1$ to the holomorphic extension
of $ \frac{1}{\kappa(1,\theta)}$  for $\theta>1$.\\

In the first part of the proof we will compute
$$l(x)= -\frac1\pi \lim_{y\to 0^+ } \Im G(-x+iy)$$
for positive $x$.
Denote by $h(z)$ the expression under exponential of $h_2$.  Let us compute  for $0<x<1$
 $$
 l(x)= -\frac1\pi \lim_{y\to 0^+ }\Im  \exp(h(-x+iy))
= - \frac1\pi e^{\Re(w)} \sin({\Im(w)}),
$$
where 
$$
w=- \sum_{m=1}^\infty \frac{(-1)^{m+1} (-x)^{m}\sin(\rho m \pi)}{m\sin(\frac{m\pi}{\alpha})}
-\sum_{k=1}^\infty \frac{(-1)^{k+1} e^{i\alpha k\pi  }x^{\alpha k}\sin(\rho\alpha k\pi)}{k\sin(\alpha k\pi)}.
$$
The last limit is justified by  a standard estimation argument, that implies that in a converging power series one can enter the limit under the series. Moreover, the same argument shows that 
  when $0<x<1$, we have
  \begin{equation}\label{granica}
  l(x)= - \frac1\pi\lim_{w\to -x,\Im w>0} \Im G(w)= - \frac1\pi e^{\Re(w)} \sin({\Im(w)}). 
  \end{equation}
Now we evaluate
\begin{align*}
\Re(w)&=\sum_{m=1}^\infty \frac{x^{m}\sin(\rho m \pi)}{m\sin(\frac{m\pi}{\alpha})}
-\sum_{k=1}^\infty \frac{(-1)^{k+1} \cos(\alpha k\pi) x^{\alpha k}\sin(\rho\alpha k\pi)}{k\sin(\alpha k\pi)},\\
\Im(w)&=-
\sum_{k=1}^\infty \frac{(-1)^{k+1} \sin(\alpha k\pi) x^{\alpha k}\sin(\rho\alpha k\pi)}{k\sin(\alpha k\pi)}=
\sum_{k=1}^\infty \frac{(-1)^{k}  x^{\alpha k}\sin(\rho\alpha k\pi)}{k}.
\end{align*}
We will need the following formulas from \cite{2007-ISG-IMR-eap}
\begin{equation}\label{eq:sumsin}
\sum_{k=1}^\infty \frac{p^k \sin(k\varphi)}{k} = \arctan \frac{p \sin \varphi}{1 - p\cos \varphi}\,,\qquad \varphi \in (0,2\pi),\,p^2 \le 1.
\end{equation}
\begin{equation}\label{eq:sumcos}
\sum_{k=1}^\infty \frac{p^k \cos(k\varphi)}{k} = -\frac{1}{2}\log (1 - 2 p \cos \varphi + p^2)\,,\qquad \varphi \in (0,2\pi),\,p^2 \le 1.
\end{equation}
Therefore applying a formula $\sin(\arctan u)=\frac{u}{\sqrt{1+u^2}}$ we get
\begin{align*}
\sin(\Im (w)) & =\sin(  \arctan \frac{-x^\alpha \sin (\rho\alpha \pi)}{1 + x^\alpha\cos (\rho\alpha \pi)}) \\
& = \frac{-\frac{x^\alpha \sin (\rho\alpha \pi)}{1 + x^\alpha\cos (\rho\alpha \pi)}}{\sqrt{1 + \left(\frac{x^\alpha \sin (\rho\alpha \pi)}{1 + x^\alpha\cos (\rho\alpha \pi)}\right)^2}} = \frac{-x^\alpha \sin (\rho\alpha \pi)}{\sqrt{x^{2\alpha} + 2 x^\alpha\cos (\rho\alpha \pi) +1}}.
\end{align*}
Now we compute $\Re(w)$. By (\ref{eq:sumcos}) we get
\begin{align*}
\Re(w) &= \sum_{m=1}^\infty \frac{(-1)^{m+1}x^{m}\sin((1-\rho) m \pi)}{m\sin(\frac{m\pi}{\alpha})}\\
&+\sum_{k=1}^\infty \frac{(-1)^{k+1} x^{\alpha k}\sin((1-\rho)\alpha k\pi)}{k\sin(\alpha k\pi)} - \sum_{k=1}^\infty \frac{(-1)^{k+1} x^{\alpha k} \cos(\rho\alpha k\pi) }{k}\\ 
& = \log \hkappa(1,x) - \frac{1}{2}\log (1 + 2 x^\alpha \cos(\rho\alpha\pi) + x^{2\alpha}).
\end{align*}
Hence 
\begin{align*}
-\frac{1}{\pi}e^{\Re(w)}\sin(\Im(w)) &= \frac{\sin (\rho\alpha \pi)}{\pi} \frac{x^\alpha  \hkappa(1,x) }{x^{2\alpha} + 2 x^\alpha\cos (\rho\alpha \pi) +1}= l(x)>0.
\end{align*}
By \cite[Lemma 5]{PG-TJ-aihp} we have for $\theta >1$
$$
\kappa(1,\theta) = \theta^{\alpha\rho} \kappa(1,1/\theta)
$$
and we use the same method and  (\ref{granica}) to obtain
\begin{equation}\label{granica2}
 l(x)= - \frac1\pi\lim_{w\to -x,\Im w>0} \Im G(w)= \frac{\sin (\rho\alpha \pi)}{\pi} \frac{x^\alpha  \hkappa(1,x) }{x^{2\alpha} + 2 x^\alpha\cos (\rho\alpha \pi) +1} 
\end{equation}
for $x>0, x\not=1$. 

 As the function $l(x)$ is continuous at $x=1$ and by \cite[p.205]{2006-SF-ptrf} 
 the limit $\lim_{w\to -1,\Im w>0} \Im G(w)$ exists, 
 it follows
that the   convergence  in (\ref{granica2})   holds also for $x=1$.

Let us now  justify the fact that the function $G(\theta)=1/\kappa(1,\theta)$
is a Stieltjes transform of a positive measure $\mu$ on $\RR^+$. We will check the conditions (S1-3)
given in the beginning of this section.

  The function $G(\theta)$ is strictly positive for $\theta\in (0,\infty)$ and it extends to 
a  holomorphic function $h_1(z)$ on $\CC \setminus \RR_-$.
Thus the conditions (S1) and (S2) are verified. In order to  justify (S3),
we  use the following  property that we proved above:   the harmonic function $-(1/\pi)\Im G$ extends continuously
to the closed upper half-space $\{\Im z\ge 0\}$
and its boundary values on $\RR$ are $  l(-x)>0$ when  $x<0$ and $0$ for $x\ge 0$. 
Taking into account the fact that
 $\lim_{|z|\to\infty}G(z)=0$ (\cite[p.205]{2006-SF-ptrf}), the maximum principle(\cite[1.10]{ABR})
 implies that  $ \Im G(z)\le 0$ on $\{\Im z> 0\}$ and (S3) also holds.

 It follows  that for a certain $a\ge 0$ we have $G(\theta)=a+\pS(l)(\theta)$. Considering
$\theta\to\infty$ we determine $a=0$.

 Finally consider any $\alpha \in (0,2]$. Since the Lebesgue measure of the set $\mathcal{L} \cup \QQ$ is $0$ we can take a sequence $\alpha_n$ tending to $\alpha$. Passing to the limit we obtain (\ref{eq:inv_kappa}) for all $\alpha \in (0,2]$.
\end{proof}
{\bf Remark}. Other proofs of the fact that  $1/\kappa(1,\theta)$
is the Stieltjes transform of a positive measure $\mu$ seem  possible, using  properties of Bernstein functions (\cite{2010-RS-RS-ZV-gsm}). 

We deduce immediately from Proposition \ref{prop:St} the following corollary.
\begin{cor}
For $\rho \in [0, 1) \setminus \{1- 1/\alpha \}$ we have 
\begin{equation}\label{eq:inv_hkappa}
\frac{\sin((1-\rho) \alpha  \pi)}{\pi} \int_0^\infty \frac{1}{x+\theta} \frac{x^{\alpha} \kappa(1,x)}{x^{2\alpha} +2x^\alpha\cos((1-\rho)\alpha\pi) +1}\, dx = \frac{1}{\hkappa(1,\theta)}.
\end{equation}
\end{cor}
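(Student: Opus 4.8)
The plan is to obtain the Corollary as a direct application of Proposition \ref{prop:St} to the dual process $\hat X_t = -X_t$, exploiting the fact that every quantity appearing in \eqref{eq:inv_kappa} has a natural counterpart under duality. No new analytic work is needed; the content is a careful relabelling.

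First I would record the two duality relations. For the dual process $\hat X_t$ the positivity coefficient is
$$
\hat\rho = \PP^0(\hat X_t \ge 0) = \PP^0(X_t \le 0) = 1 - \rho,
$$
where I use that a strictly $\alpha$-stable process has no atom at the origin, so that $\PP^0(X_t = 0) = 0$. Moreover, by the very definition adopted at the start of this section, the bivariate Laplace exponent of the ascending ladder process built from $\hat X_t$ is $\hkappa$, while the exponent associated with \emph{its} dual---which is again the original process $X_t$---is $\kappa$. Thus passing from $X_t$ to $\hat X_t$ interchanges the roles of $\kappa$ and $\hkappa$ and replaces $\rho$ by $1-\rho$.

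Next I would substitute these identifications into \eqref{eq:inv_kappa}. Writing Proposition \ref{prop:St} for the process $\hat X_t$ amounts to replacing $\rho$ by $1-\rho$, $\kappa(1,\theta)$ by $\hkappa(1,\theta)$, and $\hkappa(1,x)$ by $\kappa(1,x)$ throughout. Under these replacements the left-hand side of \eqref{eq:inv_kappa} becomes exactly the integral of \eqref{eq:inv_hkappa}, with $\sin(\rho\alpha\pi)\to\sin((1-\rho)\alpha\pi)$ and $\cos(\rho\alpha\pi)\to\cos((1-\rho)\alpha\pi)$, while the right-hand side becomes $1/\hkappa(1,\theta)$. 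Finally I would translate the admissibility condition: the hypothesis $\rho \in (0,1]\setminus\{1/\alpha\}$ of Proposition \ref{prop:St}, read for the dual positivity coefficient $1-\rho$, reads $1-\rho \in (0,1]\setminus\{1/\alpha\}$, i.e. $\rho \in [0,1)\setminus\{1-1/\alpha\}$, which is precisely the range stated in the Corollary.

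Since the argument is purely a relabelling, there is no genuine analytic obstacle. The only point requiring care is to be certain that the two duality relations $\hat\rho = 1-\rho$ and the $\kappa \leftrightarrow \hkappa$ interchange are the correct ones under the normalization $\kappa_{\alpha,\rho}(1,0)=1$ fixed above, so that no stray multiplicative constant is introduced when Proposition \ref{prop:St} is applied to $\hat X_t$. Once the normalization is checked to be preserved under duality, the identity \eqref{eq:inv_hkappa} follows immediately.
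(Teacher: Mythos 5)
Your proof is correct and matches the paper's intent exactly: the paper states the corollary is deduced ``immediately'' from Proposition \ref{prop:St}, and the implicit argument is precisely the duality relabelling you spell out ($\hat X_t = -X_t$, $\hat\rho = 1-\rho$, interchange of $\kappa$ and $\hkappa$, with the hypothesis $\rho\in[0,1)\setminus\{1-1/\alpha\}$ being the dual of $\rho\in(0,1]\setminus\{1/\alpha\}$). Your version is simply a more explicit write-up of the same one-step deduction.
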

\section{Laplace transform of $\tau$ and applications}

The following theorem is the main result of the article.
\begin{thm}\label{thm:main}

Let $X_t$ be a non-spectrally positive strictly $\alpha$-stable process on $\RR$. For any $t>0$ we have
\begin{equation}\label{formula:main}
\EE^1 e^{-t \tau} = \frac{\sin((1-\rho)\alpha\pi)}{\pi} \int_0^\infty e^{-t^{1/\alpha}x} \frac{x^{\alpha-1} \kappa(1,x)}{x^{2\alpha} + 2x^{\alpha}\cos((1-\rho)\alpha\pi) +1}\,dx .
\end{equation}
\end{thm}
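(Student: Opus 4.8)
The plan is to reduce the exit time to a first-passage time of the dual process, to express its Laplace transform through $\hkappa(1,\cdot)$ by a Wiener--Hopf identity, and then to identify the result with the integral in (\ref{formula:main}) by means of the inverse Stieltjes transform already computed in the Corollary (\ref{eq:inv_hkappa}).

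First I would use spatial homogeneity and duality. Under $\PP^1$ the process equals $1+X_t$ with $X$ started at $0$, so
$\tau=\inf\{t\colon X_t\le -1\}=\inf\{t\colon \hat X_t\ge 1\}$, the first-passage time of the dual process $\hat X_t=-X_t$ above level $1$. Writing $\sigma_x=\inf\{t\colon \hat X_t>x\}$, the assumption that $X$ is non-spectrally positive means $\hat X$ is non-spectrally negative, so its ascending ladder height is a genuine $\alpha(1-\rho)$-stable subordinator with no drift; hence $\hat X$ does not creep upwards and $\tau=\sigma_1$ almost surely. Thus $\EE^1 e^{-t\tau}=\EE^0 e^{-t\sigma_1}=:F(t)$.

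Next I would express $F$ through a Wiener--Hopf factor. Let $\mathbf e_q$ be an independent exponential variable of parameter $q$ and let $\overline{\hat X}$ denote the running supremum of $\hat X$, whose ascending ladder exponent is $\hkappa$. The standard identity $\EE e^{-\mu\overline{\hat X}_{\mathbf e_q}}=\hkappa(q,0)/\hkappa(q,\mu)$, together with $\{\sigma_x<\mathbf e_q\}=\{\overline{\hat X}_{\mathbf e_q}>x\}$, gives $\EE^0 e^{-q\sigma_x}=\PP(\overline{\hat X}_{\mathbf e_q}>x)$ and hence
\begin{equation}\label{eq:PR}
\int_0^\infty e^{-\mu x}\,\EE^0 e^{-q\sigma_x}\,dx=\frac1\mu\left(1-\frac{\hkappa(q,0)}{\hkappa(q,\mu)}\right).
\end{equation}
Stable scaling gives $\sigma_x\stackrel{d}{=}x^\alpha\sigma_1$, so $\EE^0 e^{-q\sigma_x}=F(qx^\alpha)$, while the homogeneity of $\hkappa$ (the dual has positivity coefficient $1-\rho$) yields $\hkappa(q,0)=q^{1-\rho}$ and $\hkappa(q,\mu)=q^{1-\rho}\hkappa(1,\mu q^{-1/\alpha})$. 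Taking $q=1$ in (\ref{eq:PR}) collapses both homogeneities and leaves
\begin{equation}\label{eq:Ftransf}
\int_0^\infty e^{-\mu x}F(x^\alpha)\,dx=\frac1\mu\left(1-\frac1{\hkappa(1,\mu)}\right).
\end{equation}

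Finally I would match the right-hand side with the integral in (\ref{formula:main}). Let $g$ be the density appearing there, so that (\ref{eq:inv_hkappa}) reads $\pS(x\,g)(\mu)=1/\hkappa(1,\mu)$. Letting $\mu\to0$ and using $\hkappa(1,0)=1$ shows that $g$ integrates to $1$; then the elementary identity $\pS(x\,g)(\mu)=\int_0^\infty g(y)\,dy-\mu\,\pS(g)(\mu)$ gives $\pS(g)(\mu)=\frac1\mu\big(1-1/\hkappa(1,\mu)\big)$. Since the Stieltjes transform is the iterated Laplace transform, $\pS(g)(\mu)=\int_0^\infty e^{-\mu x}\big(\int_0^\infty e^{-xy}g(y)\,dy\big)\,dx$, so by (\ref{eq:Ftransf}) both $F(x^\alpha)$ and $\int_0^\infty e^{-xy}g(y)\,dy$ have the same Laplace transform in $x$. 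As both are continuous and bounded in $x$, uniqueness of the Laplace transform forces them to coincide, and substituting $x=t^{1/\alpha}$ yields (\ref{formula:main}). The main obstacle is the first step: justifying (\ref{eq:PR}) together with the no-creeping reduction $\tau=\sigma_1$, which is precisely where the non-spectrally-positive hypothesis (equivalently $\rho\neq 1-1/\alpha$, so that (\ref{eq:inv_hkappa}) is available) enters; the remaining transform manipulations are routine.
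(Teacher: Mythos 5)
Your proof is correct and takes essentially the same route as the paper: the paper likewise combines the fluctuation identity $\int_0^\infty e^{-\theta y}\EE^y e^{-\eta\tau}\,dy=\frac{1}{\theta}-\frac{\hkappa(\eta,0)}{\theta\hkappa(\eta,\theta)}$ (cited from Kuznetsov as (\ref{eq:secondfactor}), which you instead re-derive via the Wiener--Hopf factor of $\overline{\hat X}_{\mathbf{e}_q}$, duality and no creeping) with the Stieltjes representation (\ref{eq:inv_hkappa}), the same elementary algebraic step (\ref{eq:impli}), the reading of the Stieltjes kernel as an iterated Laplace transform, and scaling. The only cosmetic differences are that you unpack that citation and run the scaling through the dual first-passage time rather than through $\EE^y e^{-\tau}=\EE^1 e^{-y^\alpha\tau}$.
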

{\bf Remark.} Observe that the only case excluded from the Theorem \ref{thm:main} is well known:
when $X_t$ is a spectrally positive $\alpha$-stable process starting from $X_0=x$, $1<\alpha<2$,
then $(\tau^x_{(0,\infty)})_{x>0}$ is a $1/\alpha$-stable subordinator and $\EE^1 e^{-t \tau}=  e^{-t^{1/\alpha}}$
(\cite{Bing} p.281).  When  $X_t$ is spectrally negative, the formula   (\ref{formula:main}) was obtained
recently by T.Simon(\cite{TS-em}).
\begin{proof}
We note that if $g(0) = 1$ then
\begin{equation}\label{eq:impli}
\int_0^\infty \frac{1}{x+t} f(x) dx = g(t)\quad
\Longrightarrow \quad\int_0^\infty \frac{1}{x+t} \frac{f(x)}{x} dx = \frac{1}{t} - \frac{g(t)}{t}.
\end{equation}
Indeed 
$$
1 = \int_0^\infty \frac{x+t}{x+t}\frac{f(x)}{x}dx = g(t) + \int_0^\infty \frac{t}{x+t} \frac{f(x)}{x} dx
$$
From \cite{2006-EAK-sv} we know that 
\begin{equation}\label{eq:secondfactor}
\int_0^\infty  e^{-\theta y}\EE^y e^{-\eta \tau}\,dy = \frac{1}{\theta} - \frac{\hkappa(\eta,0)}{\theta\hkappa(\eta,\theta)},
\end{equation}
where $\tau = \tau_{(0,\infty)}$ and $\eta, \theta >0$. Putting $\eta = 1$ and applying (\ref{eq:impli}) to (\ref{eq:inv_hkappa}) we get
\begin{align*}
&\int_0^\infty  e^{-\theta y}\EE^y e^{-\tau}\,dy  \\ 
= & \frac{\sin((1-\rho) \alpha  \pi)}{\pi} \int_0^\infty \frac{1}{x+\theta} \frac{x^{\alpha-1} \kappa(1,x)}{x^{2\alpha} +2x^\alpha\cos((1-\rho)\alpha\pi) +1}\, dx \\
= & \frac{\sin((1-\rho) \alpha  \pi)}{\pi} \int_0^\infty\int_0^\infty e^{-y(x+\theta)} \frac{x^{\alpha-1} \kappa(1,x)}{x^{2\alpha} +2x^\alpha\cos((1-\rho)\alpha\pi) +1}\, dy\, dx \\
= & \int_0^\infty e^{- \theta y} \left(\frac{\sin((1-\rho) \alpha  \pi)}{\pi} \int_0^\infty e^{-yx} \frac{x^{\alpha-1} \kappa(1,x)}{x^{2\alpha} +2x^\alpha\cos((1-\rho)\alpha\pi) +1}\, dx \right) dy.
\end{align*}
Therefore
$$
\EE^y e^{-\tau}= \frac{\sin((1-\rho) \alpha  \pi)}{\pi} \int_0^\infty e^{-yx} \frac{x^{\alpha-1} \kappa(1,x)}{x^{2\alpha} +2x^\alpha\cos((1-\rho)\alpha\pi) +1}\, dx
$$
and assertion of the theorem follows from scaling property $\EE^y e^{-\tau} = \EE^1 e^{-y^\alpha \tau}$ of stable processes.
\end{proof}
For $\alpha \ge 1$ we immediately obtain from Theorem \ref{thm:main} a formula for the density $h= h_{\alpha,\rho}$ of $\tau$ under $\PP^1$.
\begin{cor}\label{cor:alpha>1}
Let $\alpha \in (1,2)$. The density of $\tau=\tau_{(0,\infty)}$ under $\PP^0$ is given by 
\begin{equation}\label{eq:alpha>1}
h(s) = \frac{\sin((1-\rho)\alpha\pi)}{\pi}\int_0^\infty \eta_{1/\alpha}(x,s) \frac{x^{\alpha-1} \kappa(1,x)}{x^{2\alpha} + 2x^{\alpha}\cos((1-\rho)\alpha\pi) +1}\, dx,
\end{equation}
where $\eta_{\gamma}(t,x)$ is the transition density of $\gamma$-stable subordinator.
\begin{proof}
Since $\int_0^\infty e^{-xs} \eta_{\gamma}(t,x) dx = e^{-ts^\gamma}$ we obtain by Theorem \ref{thm:main}
\begin{align*}
&\int_0^\infty e^{-st} h_\alpha(s)\,ds \\
&= \frac{\sin((1-\rho)\alpha\pi)}{\pi}\int_0^\infty \int_0^\infty e^{-st}\eta_{1/\alpha}(x,s) \frac{x^{\alpha-1} \kappa(1,x)}{x^{2\alpha} + 2x^{\alpha}\cos((1-\rho)\alpha\pi) +1}\,ds\,dx\\
&=\int_0^\infty e^{-st} \frac{\sin((1-\rho)\alpha\pi)}{\pi}\int_0^\infty  \eta_{1/\alpha}(x,s) \frac{x^{\alpha-1} \kappa(1,x)}{x^{2\alpha} + 2x^{\alpha}\cos((1-\rho)\alpha\pi) +1}\,dx\,ds
\end{align*}
and the assertion follows.
\end{proof}
\end{cor}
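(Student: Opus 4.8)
The plan is to read the density $h$ directly off the Laplace transform computed in Theorem~\ref{thm:main}, by recognizing the kernel $e^{-t^{1/\alpha}x}$ as itself a Laplace transform in the variable $t$ and then inverting via the uniqueness theorem for Laplace transforms. The feature that makes the argument work precisely for $\alpha\in(1,2)$ is that then $1/\alpha\in(1/2,1)$ is an admissible index of a stable subordinator, so the transition density $\eta_{1/\alpha}(x,s)$ exists and is a genuine probability density in $s$; this is why the restriction in the statement is exactly $\alpha\in(1,2)$.

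First I would record the elementary identity for the $1/\alpha$-stable subordinator,
\begin{equation*}
\int_0^\infty e^{-ts}\,\eta_{1/\alpha}(x,s)\,ds = e^{-x\,t^{1/\alpha}},\qquad x,t>0,
\end{equation*}
in which the time parameter of the subordinator is $x$ (the integration variable of Theorem~\ref{thm:main}) while its spatial variable $s$ will play the role of the variable of the sought density $h$. Substituting this into formula~(\ref{formula:main}) turns $\EE^1 e^{-t\tau}$ into the double integral
\begin{equation*}
\EE^1 e^{-t\tau} = \frac{\sin((1-\rho)\alpha\pi)}{\pi}\int_0^\infty\!\!\int_0^\infty e^{-ts}\,\eta_{1/\alpha}(x,s)\,\frac{x^{\alpha-1}\kappa(1,x)}{x^{2\alpha}+2x^\alpha\cos((1-\rho)\alpha\pi)+1}\,ds\,dx .
\end{equation*}

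Next I would interchange the order of integration and collect the inner $ds$-integral, which produces exactly the claimed function $h(s)$ together with the representation $\EE^1 e^{-t\tau}=\int_0^\infty e^{-ts}h(s)\,ds$. The only delicate point — the main, though rather mild, obstacle — is the justification of this interchange, and it is handled by Tonelli's theorem once one observes that the entire integrand is nonnegative: indeed $\eta_{1/\alpha}\ge 0$, while the factor $\frac{\sin((1-\rho)\alpha\pi)}{\pi}\,\frac{x^{\alpha-1}\kappa(1,x)}{x^{2\alpha}+2x^\alpha\cos((1-\rho)\alpha\pi)+1}$ is $1/x$ times the strictly positive inverse Stieltjes transform density exhibited in Proposition~\ref{prop:St} and its corollary~(\ref{eq:inv_hkappa}). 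Finally, since the Laplace transform of a finite measure on $[0,\infty)$ determines it uniquely, the equality $\EE^1 e^{-t\tau}=\int_0^\infty e^{-ts}h(s)\,ds$ for all $t>0$ forces the law of $\tau$ (under $\PP^1$, by Theorem~\ref{thm:main}) to be absolutely continuous with density $h$; in particular no prior assumption on the absolute continuity of $\tau$ is required, as it falls out of the computation itself.
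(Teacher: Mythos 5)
Your proof is correct and is essentially the paper's own argument: both rest on the identity that the Laplace transform of $\eta_{1/\alpha}(x,\cdot)$ is $e^{-xt^{1/\alpha}}$, a Fubini/Tonelli interchange, and uniqueness of Laplace transforms, the only difference being that the paper computes the Laplace transform of the claimed density $h$ and matches it with Theorem~\ref{thm:main}, while you expand the kernel $e^{-t^{1/\alpha}x}$ inside Theorem~\ref{thm:main} and read off $h$ -- the same computation in the opposite direction. Your explicit justification of the interchange by nonnegativity, and your remark that absolute continuity of $\tau$ is a conclusion rather than a hypothesis, are welcome refinements of what the paper leaves implicit.
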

\begin{cor}\label{cor:alpha=1}
 If $X_t$ is a Cauchy process on $\RR$ ($\alpha=1, \rho=1/2$) then
 the density of the exit time $\tau=\tau_{(0,\infty)}$ under $\PP^1$ is given by
 $$
 h(x) =\frac{1}{\pi} \frac{\kappa(1,x)}{x^{2} +1}, \ x\ge 0.
 $$
\end{cor}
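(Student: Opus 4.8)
The plan is to specialize Theorem \ref{thm:main} to the Cauchy case and read off the density directly; no new machinery is needed. First I would check that the hypothesis of the theorem is met: the symmetric Cauchy process has jumps of both signs, hence is non-spectrally positive, so Theorem \ref{thm:main} applies with $\alpha=1$ and $\rho=1/2$.

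Next I would substitute these values into (\ref{formula:main}) and simplify the trigonometric and power factors. Since $(1-\rho)\alpha\pi=\pi/2$ we have $\sin((1-\rho)\alpha\pi)=1$ and $\cos((1-\rho)\alpha\pi)=0$; moreover $t^{1/\alpha}=t$, $x^{\alpha-1}=1$ and $x^{2\alpha}=x^2$. With these substitutions (\ref{formula:main}) collapses to
$$
\EE^1 e^{-t\tau} = \frac{1}{\pi}\int_0^\infty e^{-tx}\,\frac{\kappa(1,x)}{x^2+1}\,dx .
$$

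The decisive feature of the case $\alpha=1$ is that the kernel $e^{-t^{1/\alpha}x}$ becomes simply $e^{-tx}$, so that the right-hand side is \emph{literally} the Laplace transform, in the variable $t$, of the function $h(x)=\frac{1}{\pi}\frac{\kappa(1,x)}{x^2+1}$. (This is exactly why, unlike Corollary \ref{cor:alpha>1}, no subordinator convolution intervenes: the Laplace variable and the integration variable already match.) I would then invoke uniqueness of the Laplace transform to conclude that $h$ is the density of $\tau$ under $\PP^1$. The only points deserving a word are that $h\ge 0$, which holds because $\kappa(1,x)\ge 0$, and that $h$ is a genuine probability density; the latter follows by letting $t\to 0^+$ in the displayed identity and using $\EE^1 e^{-t\tau}\to 1$ (recall $\tau<\infty$ almost surely, since the symmetric process crosses $0$), which yields $\int_0^\infty h(x)\,dx=1$. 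There is no substantial obstacle here: the corollary is in essence the $\alpha=1$ evaluation of the main theorem, and the entire content is the observation that for $\alpha=1$ the transform becomes diagonal in $t$ and $x$.
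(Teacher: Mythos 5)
Your proposal is correct and matches what the paper intends: the corollary is stated without proof precisely because it is the immediate specialization of Theorem \ref{thm:main} to $\alpha=1$, $\rho=1/2$, where $\sin((1-\rho)\alpha\pi)=1$, $\cos((1-\rho)\alpha\pi)=0$, and the kernel $e^{-t^{1/\alpha}x}$ becomes $e^{-tx}$, so the right-hand side of (\ref{formula:main}) is already a Laplace transform in $t$ and uniqueness identifies the density. Your additional checks (the non-spectrally-positive hypothesis, positivity, and total mass one via $t\to 0^+$) are sound and consistent with the paper's remarks.
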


\noindent{\bf Remark}. The above formula  for $h_{1,1/2}$ was obtained previously by Darling in \cite{1956-DAD-tams} (see also \cite[(7.13)]{KKMS}).\\\\
For  $\alpha\not =1$, Theorem  \ref{thm:main} gives interesting multiplicative convolution
relations verified by $\tau$. We present them in the following subsection.

\subsection{Interpretation in terms of multiplicative convolutions}

For a given strictly $\alpha$-stable process $X_t$ with $\rho \in [0,1) \setminus \{1-1/\alpha\}$ we define the following function on $\RR^+$ 
$$
m_{\alpha,\rho}(x)=\frac{\sin((1-\rho)\alpha\pi)}{\pi\alpha} \frac{\kappa(1,x^{1/\alpha})}{x^{2} + 2x\cos((1-\rho)\alpha\pi) +1}, \ x\ge 0.
$$
Observe that $m_{\alpha,\rho}(x)$ is a probability density on $\RR^+$. This follows from the formula
$$
\frac{\sin((1-\rho)\alpha\pi)}{\pi} \int_0^\infty  \frac{x^{\alpha-1} \kappa(1,x)}{x^{2\alpha} + 2x^{\alpha}\cos((1-\rho)\alpha\pi) +1}\,dx=1
$$
obtained from (\ref{formula:main}) when $t\to 0$ and by a change of variables $x=y^{1/\alpha}$.

Denote by $M_{\alpha,\rho}$ a positive random variable with  density $m_{\alpha,\rho}$. The variable $M_{\alpha,1/\alpha}$, $1<\alpha<2$ appeared for the first time in \cite{TS-em}
for the special case of a completely asymmetric $\alpha$-stable process, $1<\alpha<2$ (in our context a spectrally negative process), when
$\kappa(1,x^{1/\alpha})=1+x^{1/\alpha}$.

Let $\gamma\in(0,1)$ and $\eta_{\gamma}(t,x)$ be the transition density of $\gamma$-stable subordinator. Denote by $N(\gamma)$ a random variable with the density $\eta_{\gamma}(1,x)$, i.e. $\EE\exp(-xN(\gamma))=e^{-x^\gamma}$,
$x\ge 0$. 

Recall that if $Y$ and $Z$ are independent random variables on $(0,\infty)$ with densities $f$ and $g$ respectively,
then the multiplicative convolution $Y \times Z^p$ is a random variable with the density 
\begin{equation}\label{eq:splot}
\PP[Y\times Z^p\in dt]= \int_0^\infty f(\frac{t}{u^p}) g(u) \frac{du}{u^p}.
\end{equation}

\begin{cor}\label{cor:sploty}
(i) Let $1<\alpha <2$. Suppose that  random variables $M_{\alpha,\rho}$ and $N(1/\alpha)$ are independent and that $X_0=1$. We have
$$\tau \stackrel{d}{=} M_{\alpha,\rho}\times N(1/\alpha), $$
(ii) Let $0<\alpha<1$. Suppose that random variables $\tau$ and $N(\alpha)$ are independent. We have
$$
\tau \times N(\alpha)^\alpha \stackrel{d}{=}  M_{\alpha,\rho}.
$$ 
\end{cor}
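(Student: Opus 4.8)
The plan is to derive both identities from a single reformulation of Theorem \ref{thm:main} that exhibits $M_{\alpha,\rho}$ explicitly, and then to read off each multiplicative convolution by inserting the subordinator $N(\gamma)$ for the index $\gamma$ appropriate to the range of $\alpha$.

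First I would rewrite Theorem \ref{thm:main} in terms of the density $m_{\alpha,\rho}$. Applying the substitution $y=x^\alpha$ (so that $x^{\alpha-1}\,dx=dy/\alpha$) to (\ref{formula:main}) --- the same change of variables already used to show that $m_{\alpha,\rho}$ is a probability density --- yields the key identity
$$
\EE^1 e^{-t\tau} = \int_0^\infty e^{-(ty)^{1/\alpha}}\, m_{\alpha,\rho}(y)\,dy = \EE\left[e^{-(t M_{\alpha,\rho})^{1/\alpha}}\right], \qquad t>0. \quad (\star)
$$
This is the only place where Theorem \ref{thm:main} is invoked; both parts of the corollary follow from $(\star)$ together with the defining identity $\EE e^{-s N(\gamma)}=e^{-s^\gamma}$ and the multiplicative convolution formula (\ref{eq:splot}).

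For part (i), with $1<\alpha<2$ we have $1/\alpha\in(0,1)$, so $N(1/\alpha)$ is a legitimate subordinator and $e^{-(tm)^{1/\alpha}}=\EE e^{-t m N(1/\alpha)}$ for each fixed $m>0$. Conditioning on $M_{\alpha,\rho}$, using independence and Fubini (everything is positive and bounded), turns $(\star)$ into
$$
\EE^1 e^{-t\tau} = \EE_{M}\EE_{N}\left[e^{-t\,M_{\alpha,\rho}N(1/\alpha)}\right] = \EE\, e^{-t\,(M_{\alpha,\rho}\times N(1/\alpha))}.
$$
Since the Laplace transforms of $\tau$ and of $M_{\alpha,\rho}\times N(1/\alpha)$ coincide for all $t>0$, uniqueness of the Laplace transform gives (i).

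For part (ii), with $0<\alpha<1$ we have $1/\alpha>1$, so $N(1/\alpha)$ is no longer available; instead I would verify the claim by computing the modified transform $t\mapsto\EE[e^{-(tX)^{1/\alpha}}]$ of the candidate variable $X=\tau\times N(\alpha)^\alpha$. Writing $(t\,\tau N(\alpha)^\alpha)^{1/\alpha}=t^{1/\alpha}\tau^{1/\alpha}N(\alpha)$, conditioning on $\tau$, and collapsing the $N(\alpha)$-integral via $\EE e^{-s N(\alpha)}=e^{-s^\alpha}$ with $s=t^{1/\alpha}\tau^{1/\alpha}$ (so that $s^\alpha=t\tau$), I obtain
$$
\EE\left[e^{-(t\,(\tau\times N(\alpha)^\alpha))^{1/\alpha}}\right] = \EE_\tau\left[e^{-t\tau}\right] = \EE^1 e^{-t\tau} = \EE\left[e^{-(t M_{\alpha,\rho})^{1/\alpha}}\right],
$$
the last equality being $(\star)$. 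It remains to pass from equality of these transforms to equality in law. This is the one step requiring care, and it is the main obstacle: the variable $N(\alpha)^\alpha$ enters through the $1/\alpha$-power inside the transform rather than linearly, so one cannot simply quote uniqueness of a plain Laplace transform. Instead I would note that the map $X\mapsto\EE[e^{-(tX)^{1/\alpha}}]$ is injective on laws on $(0,\infty)$: setting $u=t^{1/\alpha}$ it is exactly the ordinary Laplace transform $u\mapsto\EE e^{-u X^{1/\alpha}}$ of $X^{1/\alpha}$, and $x\mapsto x^{1/\alpha}$ is a bijection of $(0,\infty)$. Uniqueness of the Laplace transform of $X^{1/\alpha}$ then yields $M_{\alpha,\rho}\stackrel{d}{=}\tau\times N(\alpha)^\alpha$, which is (ii). All interchanges of expectation and integration above are justified by positivity.
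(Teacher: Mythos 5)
Your proof is correct, and it reaches both identities by a route that is recognizably parallel to the paper's but organized differently. The paper proves (i) at the density level: it starts from Corollary \ref{cor:alpha>1}, substitutes $x^\alpha = u$ in (\ref{eq:alpha>1}), and uses the scaling property $\eta_{1/\alpha}(u^{1/\alpha},s) = u^{-1}\eta_{1/\alpha}(1,su^{-1})$ of the subordinator transition density, so that the integral becomes literally the convolution density (\ref{eq:splot}); for (ii) it inserts $e^{-ty}=\EE\exp(-(ty)^{1/\alpha}N(\alpha))$ into the left-hand side of (\ref{formula:main}), applies Fubini and the change of variables $x=y^{1/\alpha}u$, and identifies the two resulting densities by uniqueness of the Laplace transform. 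You instead run everything through the single identity $(\star)$, i.e.\ Theorem \ref{thm:main} rewritten as $\EE^1 e^{-t\tau}=\EE\, e^{-(tM_{\alpha,\rho})^{1/\alpha}}$, and argue at the level of random variables: (i) follows by recognizing $e^{-(tm)^{1/\alpha}}$ as $\EE e^{-tmN(1/\alpha)}$ and invoking ordinary Laplace uniqueness, and (ii) by conditioning on $\tau$ and by the injectivity of $t\mapsto \EE e^{-(tX)^{1/\alpha}}$, which you correctly reduce to Laplace uniqueness for $X^{1/\alpha}$. What your version buys: it never needs the transition density $\eta_{1/\alpha}(t,x)$ or its scaling (so it is independent of Corollary \ref{cor:alpha>1}), it treats both parts uniformly from one formula, and it makes explicit the identification-in-law step that the paper compresses into ``by unicity of the Laplace transform.'' What the paper's version buys: it works directly with densities, so the convolution formula (\ref{eq:splot}) appears with an explicit integrand and the density of $\tau$ is exhibited along the way. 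One small point to add for completeness: since the corollary is stated in the notation $Y\times Z^p$ defined by the density (\ref{eq:splot}), you should note in one line that this variable coincides in law with the product $Y\cdot Z^p$ of the independent variables (immediate from (\ref{eq:splot}) by a change of variables), as that identification is the bridge between your expectation computations and the statement as written.
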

\begin{proof}

Part (i) follows immediately from Corollary \ref{cor:alpha>1}. In (\ref{eq:alpha>1}) we substitute $x^\alpha =u$ and use the scaling property $\eta_{1/\alpha}(u^{1/\alpha},s) = u^{-1}\eta_{1/\alpha}(1,su^{-1})$.

In order to prove (ii), we use $e^{-ty}=\EE[-(ty)^{1/\alpha} N(\alpha)]$ in the left-hand side of (\ref{formula:main}), we apply
Fubini
and  change variables $x=y^{1/\alpha } u$. By unicity of the Laplace transform we get
$$
\int_0^\infty h_\alpha(\frac{x^\alpha}{u^\alpha})\eta_{\alpha}(u)\frac{du}{u^\alpha}=
\frac{\sin((1-\rho)\alpha\pi)}{\pi\alpha}   \frac{\kappa(1,x)}{x^{2\alpha} + 2x^{\alpha}\cos((1-\rho)\alpha\pi) +1}
$$
Replacing $x^\alpha$ by $x$ in the last formula and using (\ref{eq:splot}) ends the proof of (ii).
\end{proof}
\subsection{Application for   the Doney's class $\hat C_{1,1}$ }

Let $\alpha\in[\frac12,1)$ and $1-\rho = 1/\alpha-1$, i.e. we consider a process dual to a process from the class $C_{1,1}$ from
R. Doney's article \cite{1987-RAD-ap}.
We denote the class of such processes by $\hat C_{1,1}$.
In this case we have by \cite{1987-RAD-ap} or \cite{PG-TJ-aihp}
$$\kappa(1,x) = \frac{x^{2\alpha}-2x^\alpha \cos \alpha\pi +1}{1+x}.$$

We recall the definition of the confluent hypergeometric functions $_1F_1$ and $U$ (see \cite{1964-AS-book})
\begin{align*}
&_1F_1(a,b,z) = \sum_{k=0}^\infty \frac{(a)_k z^k}{(b)_k k!}\,, \quad z \in \CC,\\
&U(a,b,z)\\
 &= \frac{\pi}{\sin \pi b}\left(\frac{_1F_1(a,b,z)}{\Gamma(1+a-b)\Gamma(b)} - z^{1-b} \frac{_1F_1(1+a-b,2-b,z)}{\Gamma(a)\Gamma(2-b)} \right), \quad z \in \CC \setminus \RR_-,
\end{align*}
where $(a)_k = a(a+1)\ldots(a+k-1)$, $(a)_0=1$ is Pochhammer symbol. Using Theorem \ref{thm:main} we get the following formulas for Laplace and Stieltjes transforms of $\tau$ 

\begin{prop}\label{cor:Lap}
 If  $\alpha\in[\frac12,1)$ and $X_t\in \hat C_{1,1}$ then
\begin{align}
(i) \quad &\EE^1 e^{-\tau t}= \frac{\sin(\alpha\pi)}{\pi} \Gamma(\alpha)\Gamma(1-\alpha, t^{1/\alpha})e^{t^{1/\alpha}},  \label{eq:Lap1}\\
(ii) \quad &\EE^1 \frac{1}{x+\tau} =  \int_0^\infty e^{-u} \left(\frac{e^{u^{1/\alpha}x^{-1/\alpha}}}{x} - \frac{u^{1/\alpha-1}}{x^{1/\alpha}} \frac{_1F_1(1,2-\alpha,u^{1/\alpha}x^{-1/\alpha})}{\Gamma(2-b)} \right)du, \label{eq:Lap2}
\end{align}
where $\Gamma(a,z) = \int_z^\infty t^{a-1}e^{-t}\,dt$ is the incomplete Gamma function.
 \end{prop}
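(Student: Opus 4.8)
\section*{Proof proposal}

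The plan is to specialize Theorem \ref{thm:main} to the class $\hat C_{1,1}$, where a cancellation collapses the integral kernel, and then to recognize the two resulting integrals as an incomplete Gamma transform and a confluent hypergeometric transform respectively. For part (i), the first observation is that the defining constraint $1-\rho=1/\alpha-1$ gives $(1-\rho)\alpha=1-\alpha$, so that $\sin((1-\rho)\alpha\pi)=\sin(\alpha\pi)$ and $\cos((1-\rho)\alpha\pi)=-\cos(\alpha\pi)$. Consequently the denominator $x^{2\alpha}+2x^\alpha\cos((1-\rho)\alpha\pi)+1$ appearing in (\ref{formula:main}) equals $x^{2\alpha}-2x^\alpha\cos(\alpha\pi)+1$, which is exactly the numerator of $\kappa(1,x)=(x^{2\alpha}-2x^\alpha\cos\alpha\pi+1)/(1+x)$ for this class. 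After cancellation the entire kernel reduces to $x^{\alpha-1}/(1+x)$, so that
$$\EE^1 e^{-t\tau}=\frac{\sin(\alpha\pi)}{\pi}\int_0^\infty e^{-t^{1/\alpha}x}\frac{x^{\alpha-1}}{1+x}\,dx.$$

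It then remains to evaluate this integral. I would do so by writing $1/(1+x)=\int_0^\infty e^{-(1+x)v}\,dv$, applying Fubini, performing the inner Gamma integral, and finally substituting to bring the outer integral into the form $\int_{t^{1/\alpha}}^\infty w^{-\alpha}e^{-w}\,dw=\Gamma(1-\alpha,t^{1/\alpha})$. This produces $\int_0^\infty e^{-sx}x^{\alpha-1}/(1+x)\,dx=e^{s}\Gamma(\alpha)\Gamma(1-\alpha,s)$ with $s=t^{1/\alpha}$, which is precisely (\ref{eq:Lap1}).

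For part (ii), I would start from the elementary identity $\frac{1}{x+\tau}=\int_0^\infty e^{-xt}e^{-t\tau}\,dt$ and take expectations, so that $\EE^1\frac{1}{x+\tau}=\int_0^\infty e^{-xt}\EE^1 e^{-t\tau}\,dt$ with $\EE^1 e^{-t\tau}$ given by (i). Using the reflection formula $\frac{\sin(\alpha\pi)}{\pi}\Gamma(\alpha)=1/\Gamma(1-\alpha)$ together with the substitution $u=xt$, one is led to
$$\EE^1\frac{1}{x+\tau}=\frac{1}{x\,\Gamma(1-\alpha)}\int_0^\infty e^{-u}\,\Gamma(1-\alpha,z)\,e^{z}\,du,\qquad z=(u/x)^{1/\alpha}.$$
The key remaining step is to rewrite $\Gamma(1-\alpha,z)e^z$ in terms of confluent hypergeometric functions. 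I would use the representation $\Gamma(1-\alpha,z)=z^{1-\alpha}e^{-z}U(1,2-\alpha,z)$, obtained from $\Gamma(1-\alpha,z)=z^{1-\alpha}e^{-z}\int_0^\infty(1+s)^{-\alpha}e^{-zs}\,ds$ (substitute $t=z(1+s)$) and the Kummer integral $U(a,b,z)=\frac1{\Gamma(a)}\int_0^\infty e^{-zs}s^{a-1}(1+s)^{b-a-1}\,ds$ with $a=1$. Expanding $U(1,2-\alpha,z)$ through the definition recalled before the statement, tracking the sign $\sin(\pi(2-\alpha))=-\sin(\alpha\pi)$, and applying the reflection formula once more to clear the Gamma factors, the bracket collapses to $e^z-z^{1-\alpha}{}_1F_1(1,2-\alpha,z)/\Gamma(2-\alpha)$; reinserting $z=(u/x)^{1/\alpha}$ and simplifying the prefactor $x^{-1}z^{1-\alpha}=u^{1/\alpha-1}x^{-1/\alpha}$ yields (\ref{eq:Lap2}).

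The main obstacle is this identification in part (ii). The cancellation in (i) is immediate once the trigonometric identities are noted, and the incomplete Gamma integral is standard. By contrast, part (ii) requires matching the incomplete Gamma function to the specific parameters $U(1,2-\alpha,z)$ and then unwinding Kummer's definition with the correct branch of $\sin(\pi(2-\alpha))$; a sign error at that point would produce the wrong combination of ${}_1F_1$ terms. The several Fubini interchanges, both in (i) and when passing from the Laplace to the Stieltjes transform, are routine and justified by the absolute convergence of the nonnegative integrands for $x,t>0$.
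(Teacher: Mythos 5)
Your proof is correct. Part (i) coincides with the paper's argument step by step: the same trigonometric cancellation via $(1-\rho)\alpha = 1-\alpha$ collapses the kernel of Theorem \ref{thm:main} to $x^{\alpha-1}/(1+x)$, and the same chain (write $1/(1+x)$ as an exponential integral, Fubini, Gamma integral, shift of variable) yields $e^{s}\Gamma(\alpha)\Gamma(1-\alpha,s)$ with $s=t^{1/\alpha}$.

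In part (ii) you take a mildly but genuinely different route. The paper does not use the closed form (\ref{eq:Lap1}) at all: it returns to the intermediate integral (\ref{eq:C11}), substitutes $u=xt$, recognizes the inner integral $\int_0^\infty e^{-zs}s^{\alpha-1}(1+s)^{-1}\,ds$ as $\Gamma(\alpha)U(\alpha,\alpha,z)$ directly from Kummer's integral representation, and then expands $U(\alpha,\alpha,z)$ through the definition, using ${}_1F_1(\alpha,\alpha,z)=e^{z}$. You instead build (ii) on the statement of (i): you convert $\Gamma(1-\alpha,z)e^{z}$ into $z^{1-\alpha}U(1,2-\alpha,z)$ (your derivation of this identity by substituting $t=z(1+s)$ in the incomplete Gamma integral is correct) and expand $U(1,2-\alpha,z)$ instead. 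The two computations are equivalent through Kummer's transformation $U(\alpha,\alpha,z)=z^{1-\alpha}U(1,2-\alpha,z)$, and your sign bookkeeping, $\sin(\pi(2-\alpha))=-\sin(\alpha\pi)$ together with the reflection formula, is exactly what is needed to land on $e^{z}-z^{1-\alpha}\,{}_1F_1(1,2-\alpha,z)/\Gamma(2-\alpha)$. What your version buys is modularity: (ii) depends only on the formula (\ref{eq:Lap1}), not on its proof; what the paper's version buys is that it never reintroduces the incomplete Gamma function, expanding a single $U$ whose first ${}_1F_1$ term is trivially $e^{z}$. Both readings treat the $\Gamma(2-b)$ in (\ref{eq:Lap2}) as the typo it is, namely $\Gamma(2-\alpha)$.
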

\begin{proof} To prove $(i)$ we use simple transformations of integrals
 \begin{align}
\EE^1 e^{-\tau t} &= \frac{\sin(\alpha(1/\alpha-1)\pi)}{\pi} \int_0^\infty  \frac{e^{-t^{1/\alpha}x} x^{\alpha-1} (x^{2\alpha}-2x^\alpha \cos \alpha\pi +1)}{(1+x)(x^{2\alpha} + 2x^{\alpha}\cos(\alpha(1/\alpha-1)\pi) +1)}\,dx \nonumber\\
& = \frac{\sin(\alpha\pi)}{\pi} \int_0^\infty  \frac{e^{-t^{1/\alpha}x} x^{\alpha-1}}{1+x}\,dx \label{eq:C11}\\
& = \frac{\sin(\alpha\pi)}{\pi} \int_0^\infty\int_0^\infty e^{-t^{1/\alpha}x} e^{-s(x+1)}x^{\alpha-1}\,ds\,dx \nonumber\\
& = \frac{\sin(\alpha\pi)}{\pi} \int_0^\infty e^{-s} \frac{\Gamma(\alpha)}{(t^{1/\alpha}+s)^{\alpha}}\,ds = \frac{\sin(\alpha\pi)\Gamma(\alpha)}{\pi} \int_{t^{1/\alpha}}^\infty e^{t^{1/\alpha}-r} \frac{1}{r^{\alpha}}\,dr \nonumber\\
& = \frac{\sin(\alpha\pi)}{\pi} \Gamma(\alpha)\Gamma(1-\alpha, t^{1/\alpha})e^{t^{1/\alpha}}.\nonumber
\end{align}
To prove $(ii)$ we use the following integral representation of $U$ (see \cite{1964-AS-book})
$$
U(a,b,z) = \frac{1}{\Gamma(a)}\int_0^\infty e^{-zt} t^{a-1}(1+t)^{b-a-1}\,dt, \qquad \RE z >0.
$$
Applying this to (\ref{eq:C11}) we get
\begin{align*}
\EE^1 \frac{1}{x+\tau} &= \frac{\sin(\alpha\pi)}{\pi} \int_0^\infty\int_0^\infty e^{-xt} e^{-t^{1/\alpha}s} \frac{s^{\alpha-1}}{1+s}\,ds\,dt \\
&= \frac{\sin(\alpha\pi)}{\pi} \int_0^\infty\int_0^\infty e^{-u} e^{-u^{1/\alpha}x^{-1/\alpha}s} \frac{s^{\alpha-1}}{x(1+s)}\,ds\,du \\
& = \frac{\sin(\alpha\pi)}{\pi} \int_0^\infty e^{-u} \Gamma(\alpha) \frac{U(\alpha,\alpha, u^{1/\alpha}x^{-1/\alpha})}{x}\,du.
\end{align*}
Hence
\begin{align*}
\EE^1 \frac{1}{x+\tau} &= \int_0^\infty \frac{e^{-u}}{x} \left(_1F_1(\alpha,\alpha,u^{1/\alpha}x^{-1/\alpha}) - \frac{u^{1/\alpha-1}}{x^{1/\alpha-1}} \frac{_1F_1(1,2-\alpha,u^{1/\alpha}x^{-1/\alpha})}{\Gamma(2-b)} \right)\,du\\
&=  \int_0^\infty e^{-u} \left(\frac{e^{u^{1/\alpha}x^{-1/\alpha}}}{x} - \frac{u^{1/\alpha-1}}{x^{1/\alpha}} \frac{_1F_1(1,2-\alpha,u^{1/\alpha}x^{-1/\alpha})}{\Gamma(2-b)} \right)\,du.
\end{align*}
 \end{proof}
It is possible to invert Stieltjes transform in (\ref{eq:Lap2}) and the resulting density of $\tau$ is given by a series either in $x$ or in $1/x$ (cf. \cite{AK-ap}). For $\alpha=2/3$ the process $X_t$ is symmetric and the density of $\tau$ has a nice integral representation involving hypergeometric function
$$
_1F_2(a; b,c; z) = \sum_{k=0}^\infty \frac{(a)_k z^k}{(b)_k(c)_k k!}, \qquad z \in \CC.
$$
\begin{cor}\label{cor:density}
Let $X_t$ be a symmetric $\frac23$-stable process on $\RR$ with $X_0=1$. Then the density of $\tau=\tau_{(0,\infty)}$
is given by the formula
\begin{equation}\label{form23}
 \PP^1(\tau \in dx)= \frac{1}{\pi} \int_0^\infty e^{-tx} \left(\sin(t^{3/2}) +  \frac{t^{1/2}{_1F_2}(1;2/3,7/6;-t^3/4)}{\Gamma(4/3)} \right)\,dt 
\end{equation}
and its Laplace transform is
\begin{equation*}
\EE^1 e^{-t \tau} = \frac{\sqrt{3}}{2\pi}\Gamma(2/3)\Gamma(1/3,  t^{3/2})e^{ t^{3/2}} \,, \qquad t >0. 
\end{equation*}
\end{cor}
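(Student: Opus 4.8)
The plan is to prove the two assertions separately: the Laplace transform by direct specialization of Proposition \ref{cor:Lap}(i), and the density by inverting the Stieltjes transform furnished by Proposition \ref{cor:Lap}(ii). First I would note that the symmetric $\tfrac23$-stable process does belong to $\hat C_{1,1}$: here $\rho=1/2$, so $1-\rho=1/2=1/\alpha-1$ with $\alpha=2/3$, exactly the defining relation of the class. The Laplace transform is then immediate: substituting $\alpha=2/3$ into (\ref{eq:Lap1}) and using $\sin(2\pi/3)=\sqrt3/2$ together with $1-\alpha=1/3$ yields $\EE^1 e^{-t\tau}=\frac{\sqrt3}{2\pi}\Gamma(2/3)\Gamma(1/3,t^{3/2})e^{t^{3/2}}$, which is the second claimed formula.

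For the density the key observation is that $\EE^1\frac{1}{x+\tau}=\int_0^\infty\frac{1}{x+s}h(s)\,ds=\pS h(x)$, so $h$ is recovered by the inverse Stieltjes transform recalled in Section 2, namely $h(s)=-\frac1\pi\lim_{y\to0^+}\Im\,\EE^1\frac{1}{-s+iy+\tau}$. I would feed the explicit expression (\ref{eq:Lap2}) into this formula, reading the misprinted $\Gamma(2-b)$ there as $\Gamma(2-\alpha)$, which for $\alpha=2/3$ equals $\Gamma(4/3)$. Writing $x=-s+iy$ and letting $y\to0^+$, the principal branch gives $\Arg x\to\pi$, hence $x^{-1/\alpha}=x^{-3/2}\to i\,s^{-3/2}$, while the hypergeometric argument $z=u^{1/\alpha}x^{-1/\alpha}\to i(u/s)^{3/2}$ becomes purely imaginary and $1/x\to-1/s$. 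Taking imaginary parts term by term, the first summand $e^{z}/x$ contributes $-\frac1s\sin\bigl((u/s)^{3/2}\bigr)$, since $e^{z}=\cos\bigl((u/s)^{3/2}\bigr)+i\sin\bigl((u/s)^{3/2}\bigr)$.

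The one genuinely non-mechanical step is the second summand. There I must show the identity $\Re\,{}_1F_1(1,4/3,iw)={}_1F_2(1;2/3,7/6;-w^2/4)$ for real $w=(u/s)^{3/2}$. This follows by isolating the even part of ${}_1F_1(1,4/3,iw)=\sum_k (iw)^k/(4/3)_k$ and applying the Pochhammer duplication identity $(4/3)_{2j}=4^{j}(2/3)_j(7/6)_j$, which collapses the even series precisely into the ${}_1F_2$. Assembling the two contributions gives $h(s)=\frac1\pi\int_0^\infty e^{-u}\bigl[\frac1s\sin((u/s)^{3/2})+\frac{u^{1/2}}{s^{3/2}\Gamma(4/3)}{}_1F_2(1;2/3,7/6;-(u/s)^3/4)\bigr]du$, and the substitution $u=ts$ turns this directly into (\ref{form23}).

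The main obstacle is analytic rather than algebraic. I must justify passing the limit $y\to0^+$ inside the $u$-integral and the choice of branch as $x$ approaches the negative real axis, because along this approach the argument $z=i(u/s)^{3/2}$ lands exactly on the line $\Re z=0$, the boundary of the half-plane where the integral representation of $U$ used to derive Proposition \ref{cor:Lap}(ii) converges. Since (\ref{eq:Lap2}) is expressed through the everywhere-convergent series ${}_1F_1$ and the entire function $e^{z}$ rather than through $U$ itself, I expect this to be handled by dominated convergence on the $e^{-u}$-weighted integral, after which the hypergeometric identity above remains the single point requiring an explicit computation.
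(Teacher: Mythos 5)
Your proposal is correct and follows essentially the same route as the paper's proof: the Laplace transform by specializing Proposition \ref{cor:Lap}(i) to $\alpha=2/3$, and the density by inverting the Stieltjes transform in (\ref{eq:Lap2}) along the branch $x^{-3/2}\to i\,s^{-3/2}$, reducing the second term via the same Pochhammer duplication identity $(4/3)_{2j}=4^{j}(2/3)_j(7/6)_j$ to the ${}_1F_2$, and finishing with the substitution $u=ts$. Your extra remarks (the $\Gamma(2-b)$ misprint being $\Gamma(2-\alpha)$, and the dominated-convergence justification of the boundary limit) only make explicit details the paper leaves implicit.
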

\begin{proof}
The second part is a direct consequence of (\ref{eq:Lap1}). By (\ref{eq:Lap2})
\begin{align*}
\EE^1 \frac{1}{x+\tau} &=
 \int_0^\infty e^{-u} \left(\frac{e^{u^{3/2}x^{-3/2}}}{x} - \frac{u^{1/2}}{x^{3/2}} \frac{_1F_1(1,4/3,u^{3/2}x^{-3/2})}{\Gamma(4/3)} \right)\,du.
\end{align*}
Inverting Stieltjes transform we get
\begin{align*}
&\PP^1(\tau \in dx) \\
&= -\frac{1}{\pi} \IM \int_0^\infty e^{-u} \left(\frac{e^{u^{3/2}x^{-3/2}e^{-3i\pi/2}}}{-x} - \frac{u^{1/2}}{x^{3/2}e^{3i\pi/2}} \frac{_1F_1(1,4/3,u^{3/2}x^{-3/2}e^{-3i\pi/2})}{\Gamma(4/3)} \right)\,du \\
&= -\frac{1}{\pi} \IM \int_0^\infty e^{-u} \left(\frac{e^{u^{3/2}x^{-3/2}i}}{-x} - i\frac{u^{1/2}}{x^{3/2}} \frac{_1F_1(1,4/3,u^{3/2}x^{-3/2}i)}{\Gamma(4/3)} \right)\,du. \\
\end{align*}
Since 
\begin{align*}
\IM i _1F_1(1,4/3,y i) &= \RE \sum_{k=0}^\infty \frac{(1)_k(iy)^k}{k!(4/3)_k} = \sum_{k=0}^\infty \frac{(-y^2)^k}{(4/3)_{2k}}\\
 &= \sum_{k=0}^\infty \frac{(1)_k(-y^2)^k}{k! (4/6)_k (7/6)_k 2^{2k}} = {_1F_2}(1; 2/3,7/6; -y^2/4)
\end{align*}
we finally get
\begin{align*}
&\PP^1(\tau \in dx) \\
&= \frac{1}{\pi} \int_0^\infty e^{-u} \left(\frac{\sin(u^{3/2}x^{-3/2})}{x} + \frac{u^{1/2}}{x^{3/2}} \frac{_1F_2(1;2/3,7/6;-u^3x^{-3}/4)}{\Gamma(4/3)} \right)\,du \\
&= \frac{1}{\pi} \int_0^\infty e^{-tx} \left(\sin(t^{3/2}) +  \frac{t^{1/2}{_1F_2}(1;2/3,7/6;-t^3/4)}{\Gamma(4/3)} \right)\,dt .
\end{align*}
\end{proof}

{\bf Remark}.  It is possible to obtain the formula
 (\ref{form23}) from the results of M. Kwa\'snicki \cite{MK}.
 However, in order to do this, one has to make several non-elementary transformations of integrals and
our approach seems simpler than using \cite{MK}.\\


{\bf Acknowledgement}. We thank Christian Berg for discussions on the Stieltjes transform
and Thomas Simon for helpful comments and bibliographical indications.

\bibliographystyle{abbrv}
%

\bibliography{ETofSP_gr}

{\noindent Piotr Graczyk 
\newline
    Laboratoire de Math\'ematiques LAREMA,  \newline
    Universit\'e d'Angers,
    2, boulevard Lavoisier,
    49045 Angers Cedex 01, France \vspace{5pt}}

 {\noindent  Tomasz  Jakubowski  \newline
    Institute of Mathematics and Computer Science \newline
    Wroclaw University of Technology\newline
    50--370 Wroc{\l}aw, Wybrze\.ze Wyspia\'nskiego 27,  Poland
\vspace{10pt}}
\newline
{graczyk@univ-angers.fr, Tomasz.Jakubowski@pwr.wroc.pl} 
 
\end{document}